\documentclass[12pt]{amsart}
\usepackage{amssymb,amscd,amsmath}
\usepackage{amsaddr}
\usepackage{fullpage}
\usepackage{tikz-cd}
\usepackage{todonotes}
\setlength\parindent{0pt}
\usepackage{verbatim} %For long comments
\usetikzlibrary{intersections}

   %%%%%%%%%%%%%%%%%%%%%%%
   %%%%%%% MATHEMATICS %%%%%%%%%
   %%%%%%%%%%%%%%%%%%%%%%%
   
\usepackage{amsthm}
\theoremstyle{definition}
\newtheorem{theorem}{Theorem}
\newtheorem{lemma}[theorem]{Lemma}
\newtheorem{proposition}[theorem]{Proposition}

\theoremstyle{definition}
\newtheorem{definition}[theorem]{Definition}

\theoremstyle{remark}
\newtheorem{remark}[theorem]{Remark}

   %%%%%%%%%%%%%%%%%%%%%%%
   %%%%%%% MATHEMATICS %%%%%%%%%
   %%%%%%%%%%%%%%%%%%%%%%%

   %%%%%%%%%%%%%%%%%%%%%%%
   %%%%%%% COMMANDS %%%%%%%%%
   %%%%%%%%%%%%%%%%%%%%%%%

	\newcommand{\C}{\mathbb{C}}
	\newcommand{\Z}{\mathbb{Z}}
	
	\newcommand{\h}{\mathcal{H}}

	 %Norm

	\newcommand{\SL}{\mathrm{SL}}

	\DeclareMathOperator{\Aut}{Aut}

    \renewcommand{\mod}{\text{ mod}\,}
	\newcommand{\abcd}[4]{\begin{pmatrix}#1&#2\\#3& #4\end{pmatrix}}
	\newcommand{\sabcd}[4]{\left(\begin{smallmatrix}#1&#2\\#3& #4\end{smallmatrix}\right)}

   %%%%%%%%%%%%%%%%%%%%%%%
   %%%%%%% COMMANDS %%%%%%%%%
   %%%%%%%%%%%%%%%%%%%%%%%

   %%%%%%%%%%%%%%%%%%%%%%%
   %%%%%%% TITLE %%%%%%%%%
   %%%%%%%%%%%%%%%%%%%%%%%

	\title{Note on Fourier expansions at cusps}
\author{Fran\c{c}ois Brunault}
\email{francois.brunault@ens-lyon.fr}
\address{\'{E}NS Lyon, UMPA, 46 all\'{e}e d'Italie, 69007 Lyon, France}
\author{Michael Neururer}
\email{neururer@mathematik.tu-darmstadt.de}
\address{TU Darmstadt, Schlo\ss gartenstr. 7, 64289 Darmstadt, Germany}
	\thanks{The second author was partially funded by the DFG-Forschergruppe 1920 and the LOEWE research unit ``Uniformized Structures in Arithmetic and Geometry''}
	\usepackage{hyperref}
	\hypersetup{pdfauthor={Fran\c{c}ois Brunault and Michael Neururer},%
	            pdftitle={Note on Fourier expansions at cusps},%
	            pdfsubject={Number Theory},%
	            pdfcreator={XeLaTeX},%
	            colorlinks=true,%
	            linkcolor=[rgb]{0,.6,1},
	            urlcolor = blue,
	            citecolor=blue}

   %%%%%%%%%%%%%%%%%%%%%%%
   %%%%%%% TITLE %%%%%%%%%
   %%%%%%%%%%%%%%%%%%%%%%%
\begin{document}

%\tableofcontents
\begin{abstract}
This was originally an appendix to our paper `Fourier expansions at cusps' \cite{brunaultneururer:FAC}. The purpose of this note is to give a proof of a theorem of Shimura on the action of $\Aut(\C)$ on modular forms for $\Gamma(N)$ from the perspective of algebraic modular forms. As the theorem is well-known, we do not intend to publish this note but want to keep it available as a preprint.
%We give two new proofs of this result: one based on products of Eisenstein series, and the other using Katz's theory of algebraic modular forms.
\end{abstract}

\maketitle

In this note we give a new proof of the following theorem which is originally due to Shimura, see \cite[Theorem 8]{shimura1975} and \cite[Lemma 10.5]{shimura2000}. It gives the interaction between the $\SL_2(\Z)$-action and the $\Aut(\C)$-action on spaces of modular forms on the group $\Gamma(N)$. These actions on a modular form $f(\tau)=\sum_n a_n e^{2\pi i n\tau/w}$ of weight $k\geq 1$ are defined as follows:
\[
f| g(\tau) = \frac{1}{(C\tau + D)^k} f\Bigl(\frac{A\tau+B}{C\tau+D}\Bigr), \qquad f^\sigma(\tau) = \sum_n \sigma(a_n) e^{2\pi in\tau/w}
\]
for $g=\sabcd ABCD\in\SL_2(\Z)$ and $\sigma\in\Aut(\C)$. For any integer $N\geq 1$ we denote $\zeta_N=e^{2\pi i/N}$.

\begin{theorem}\cite{shimura1975,shimura2000}\label{fg sigma}
Let $f\in M_k(\Gamma(N))$ be a modular form of weight $k \geq 1$ on $\Gamma(N)$ and $g$ and $\sigma$ be as above such that $\sigma(\zeta_N)=\zeta_N^\lambda$ with $\lambda \in (\Z/N\Z)^\times$. Then
\[
(f|g)^{\sigma} = f^{\sigma} | g_\lambda,
\]
where $g_\lambda$ is any lift in $\SL_2(\Z)$ of the matrix $\sabcd{A}{\lambda B}{\lambda^{-1} C}{D} \in \SL_2(\Z/N\Z)$.
\end{theorem}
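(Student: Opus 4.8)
The plan is to pass to the algebraic description of $M_k(\Gamma(N))$ and to read off the theorem as a matrix identity. First I would recall the moduli picture: over $\Z[1/N]$ there is the scheme $M(N)$ of pairs $(E,\phi)$ with $\phi\colon(\Z/N\Z)^2\xrightarrow{\sim}E[N]$ (no condition on the Weil pairing), its smooth compactification $\overline{M(N)}$, and the $\Z[1/N]$-module $\mathcal{M}_k=H^0(\overline{M(N)},\omega^{\otimes k})$ of algebraic modular forms. Over $\C$ one has $\mathcal{M}_k\otimes\C=\bigoplus_\epsilon M_k(\Gamma(N))_\epsilon$, the sum over the primitive $N$-th roots of unity $\epsilon=e_N(\phi(1,0),\phi(0,1))$; with $\zeta_N=e^{2\pi i/N}$ the summand attached to $\zeta_N$ is the classical $M_k(\Gamma(N))$, via $\tau\mapsto(\C/(\Z\tau+\Z),\phi_\tau)$ with $\phi_\tau(a,b)=(a\tau+b)/N$, and for any $H$ in that summand I write $f_H\in M_k(\Gamma(N))$ for the corresponding form, so $f=f_F$. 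Two actions are available on $\mathcal{M}_k$: each $M\in\GL_2(\Z/N\Z)$ gives an automorphism $(E,\phi)\mapsto(E,\phi\circ M)$ of $\overline{M(N)}$, hence an operator $R_M$ on $\mathcal{M}_k$, carrying the $\epsilon$-summand to the $\epsilon^{(\det M)^{-1}}$-summand and forming a left $\GL_2(\Z/N\Z)$-action; and, since $\Z[1/N]$ is fixed by $\Aut(\C)$, there is a semilinear action $\tilde\sigma$ of $\Aut(\C)$ on $\mathcal{M}_k\otimes\C$, carrying the $\epsilon$-summand to the $\sigma(\epsilon)$-summand. Because the matrices $M$ are defined over $\Z$, one has $\tilde\sigma R_M=R_M\tilde\sigma$.

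Next I would compute each operation in these terms. For $g=\sabcd ABCD\in\SL_2(\Z)$ the isomorphism $\C/(\Z g\tau+\Z)\xrightarrow{\,\times(C\tau+D)\,}\C/(\Z\tau+\Z)$ carries $2\pi i\,dz$ to $(C\tau+D)\cdot2\pi i\,dz$ and $\phi_{g\tau}$ to $\phi_\tau\circ g^{\mathsf{T}}$ (the transpose enters because $g$ acts on $\tau$, not on the level structure directly); feeding this into the weight-$k$ homogeneity of $F$ gives $f|g=f_{R_{g^{\mathsf{T}}}F}$. For the Galois operation I would use the $q$-expansion principle through the Tate curve: $f(\tau)=\sum_n a_nq^{n/N}$ equals $F(\mathrm{Tate}(q),dt/t,\phi^{\mathrm{can}})\in\C((q^{1/N}))$ with $\phi^{\mathrm{can}}(a,b)=(q^{1/N})^a\zeta_N^b$. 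Since $\mathrm{Tate}(q)$ and $dt/t$ are defined over $\Z((q))$, base-changing this triple along the ring automorphism $\tilde\sigma$ of $\C((q^{1/N}))$ that acts by $\sigma$ on coefficients and fixes $q^{1/N}$ leaves the curve and differential unchanged and turns $\phi^{\mathrm{can}}$ into $\phi^{\mathrm{can}}\circ\mathrm{diag}(1,\lambda)$; compatibility of $F$ with base change then yields
\[
\textstyle\sum_n\sigma(a_n)q^{n/N}=(\tilde\sigma F)\bigl(\mathrm{Tate}(q),dt/t,\phi^{\mathrm{can}}\circ\mathrm{diag}(1,\lambda)\bigr)=\bigl(R_{\mathrm{diag}(1,\lambda)}\tilde\sigma F\bigr)\bigl(\mathrm{Tate}(q),dt/t,\phi^{\mathrm{can}}\bigr).
\]
Here $\tilde\sigma F$ lies in the $\zeta_N^\lambda$-summand and $R_{\mathrm{diag}(1,\lambda)}$ carries it back to the $\zeta_N$-summand, so this identity shows that $f^\sigma$ is again a modular form and equals $f_{R_{\mathrm{diag}(1,\lambda)}\tilde\sigma F}$.

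Finally I would assemble the two formulas. Using $\tilde\sigma R_{g^{\mathsf{T}}}=R_{g^{\mathsf{T}}}\tilde\sigma$ and that $R$ is a left action,
\[
(f|g)^\sigma=f_{R_{\mathrm{diag}(1,\lambda)}R_{g^{\mathsf{T}}}\tilde\sigma F}=f_{R_{\mathrm{diag}(1,\lambda)\,g^{\mathsf{T}}}\tilde\sigma F},\qquad f^\sigma|g_\lambda=f_{R_{g_\lambda^{\mathsf{T}}}R_{\mathrm{diag}(1,\lambda)}\tilde\sigma F}=f_{R_{g_\lambda^{\mathsf{T}}\,\mathrm{diag}(1,\lambda)}\tilde\sigma F},
\]
so the theorem reduces to $g_\lambda^{\mathsf{T}}\,\mathrm{diag}(1,\lambda)=\mathrm{diag}(1,\lambda)\,g^{\mathsf{T}}$ in $\GL_2(\Z/N\Z)$ (both sides being visibly independent of the lift of $g_\lambda$, since $R$ is an action of $\GL_2(\Z/N\Z)$). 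Transposing, this reads $g_\lambda\equiv\mathrm{diag}(1,\lambda^{-1})\,g\,\mathrm{diag}(1,\lambda)=\sabcd{A}{\lambda B}{\lambda^{-1}C}{D}\pmod N$, which is precisely the matrix in the statement. The real work is the middle paragraph: one has to set up the moduli problem, the Tate-curve $q$-expansion and the $\Aut(\C)$-action carefully enough that the exact twisting matrix $\mathrm{diag}(1,\lambda)$ — acting on the second basis vector, in the direction dictated by $\sigma(\zeta_N)=\zeta_N^\lambda$ — comes out, since every sign and transpose in the two displays must be pinned down; it is exactly the interplay between the transpose produced by $f\mapsto f|g$ and the diagonal twist produced by $f\mapsto f^\sigma$ that makes the two sides match.
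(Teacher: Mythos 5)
Your proposal is correct and is essentially the paper's proof in different clothing: both arguments algebraize the slash action as precomposition of the level structure, use the Tate curve with its canonical level structure to see that the coefficientwise action of $\sigma$ twists that level structure by $\mathrm{diag}(1,\lambda)$ (because $\sigma$ moves $\zeta_N$ but fixes $q^{1/N}$), and conclude from the same matrix identity $g\,\mathrm{diag}(1,\lambda)=\mathrm{diag}(1,\lambda)\,g_\lambda$; your packaging via naive level structures on $M(N)$ over $\Z[1/N]$ with the semilinear $\Aut(\C)$-action replaces the paper's $\mu_N\times\Z/N\Z$-test objects and explicit $\sigma$-twisted base change $R\mapsto R^\sigma$, but the computational content is identical. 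The only point to tidy is that $M(N)$ is a fine moduli scheme only for $N\geq 3$, so for $N\leq 2$ you should either work with rules on test objects (as the paper does) or reduce to an auxiliary higher level -- a cosmetic fix.
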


This theorem immediately implies that if a modular form $f$ of level $N$ has Fourier coefficients in a field $K_f$, then the Fourier coefficients of $f|\sabcd ABCD$ for any $\sabcd ABCD\in\SL_2(\Z)$ will lie in $K_f(\zeta_N)$. In \cite{brunaultneururer:FAC} we obtain improved results if $f$ is a modular form for $\Gamma_0(N)$ or $\Gamma_1(N)$ and in the case of newforms on $\Gamma_0(N)$, we determine the number field generated by the Fourier coefficients of $f|\sabcd ABCD$ explicitly.

We recall the theory of algebraic modular forms, in order to give a new proof of Theorem \ref{fg sigma}. For more details on this theory, see \cite[Chap. II]{katz-interpolation} and the references therein.

\begin{definition} Let $R$ be an arbitrary commutative ring, and let $N \geq 1$ be an integer. A \emph{test object} of level $N$ over $R$ is a triple $T=(E,\omega,\beta)$ where $E/R$ is an elliptic curve, $\omega \in \Omega^1(E/R)$ is a nowhere vanishing invariant differential, and $\beta$ is a \emph{level $N$ structure} on $E/R$, that is an isomorphism of $R$-group schemes
\begin{equation*}
\beta : (\mu_N)_R \times (\Z/N\Z)_R \xrightarrow{\cong} E[N]
\end{equation*}
satisfying $e_N(\beta(\zeta,0),\beta(1,1)) = \zeta$ for every $\zeta \in (\mu_N)_R$. Here $\mu_N = \operatorname{Spec} \Z[t]/(t^N-1)$ is the scheme of $N$-th roots of unity, and $e_N$ is the Weil pairing on $E[N]$ \footnote{Our definition of the Weil pairing is the reciprocal of Silverman's definition \cite[III.8]{silverman:AEC}. With our definition, we have $e_N(1/N,\tau/N)=e^{2\pi i/N}$ on the elliptic curve $\C/(\Z+\tau\Z)$ with $\mathrm{Im}(\tau)>0$.}.
\end{definition}
If $\phi : R \to R'$ is a ring morphism, we denote by $T_{R'} = (E_{R'},\omega_{R'},\beta_{R'})$ the base change of $T$ to $R'$ along $\phi$.

The isomorphism classes of test objects over $\C$ are in bijection with the set of lattices $L$ in $\C$ endowed with a symplectic basis of $\frac1N L/L$ \cite[2.4]{katz-interpolation}. Another example is given by the Tate curve $\mathrm{Tate}(q) = \mathbb{G}_m/q^\Z$ \cite[\S 8]{deligne:formulaire}. It is an elliptic curve over $\Z((q))$ endowed with the canonical differential $\omega_\mathrm{can}=dx/x$ and the level $N$ structure $\beta_\mathrm{can}(\zeta,n)= \zeta q^{n/N} \mod{q^\Z}$. The test object $(\mathrm{Tate}(q),\omega_\mathrm{can},\beta_\mathrm{can})$ is defined over $\Z((q^{1/N}))$.

\begin{definition} An \emph{algebraic modular form} of weight $k \in \Z$ and level $N$ over $R$ is the data, for each $R$-algebra $R'$, of a function
\begin{equation*}
F = F_{R'} : \{\textrm{isomorphism classes of test objects of level $N$ over $R'$}\} \to R'
\end{equation*}
satisfying the following properties:
\begin{enumerate}
\item \label{AlgMF cond 1} $F(E,\lambda^{-1} \omega,\beta) = \lambda^k F(E,\omega,\beta)$ for every $\lambda \in (R')^\times$;
\item \label{AlgMF cond 2} $F$ is compatible with base change: for every morphism of $R$-algebras $\psi : R' \to R''$ and for every test object $T$ of level $N$ over $R'$, we have $F_{R''}(T_{R''}) = \psi(F_{R'}(T))$.
\end{enumerate}
We denote by $M_k^{\mathrm{alg}}(\Gamma(N);R)$ the $R$-module of algebraic modular forms of weight $k$ and level $N$ over $R$.
\end{definition}
 Evaluating at the Tate curve provides an injective $R$-linear map
\begin{equation*}
M_k^{\mathrm{alg}}(\Gamma(N);R) \hookrightarrow \Z((q^{1/N})) \otimes_\Z R
\end{equation*}
called the $q$-expansion map. The \emph{$q$-expansion principle} states that if $R'$ is a subring of $R$, then an algebraic modular form $F \in M_k^\mathrm{alg}(\Gamma(N);R)$ belongs to $M_k^\mathrm{alg}(\Gamma(N);R')$ if and only if the $q$-expansion of $F$ has coefficients in $R'$.

Algebraic modular forms are related to classical modular forms as follows. To any algebraic modular form $F \in M_k^\mathrm{alg}(\Gamma(N);\C)$, we associate the function $F^\mathrm{an} : \h \to \C$ defined by
\begin{equation*}
F^\mathrm{an}(\tau) = F\Bigl(\frac{\C}{2\pi i\Z+2\pi i\tau\Z},dz,\beta_\tau \Bigr)
\end{equation*}
with $\beta_\tau(\zeta_N^m,n):=[2\pi i(m+n\tau)/N]$. 
\begin{proposition}
The map $F \mapsto F^\mathrm{an}$ induces an isomorphism between $M_k^\mathrm{alg}(\Gamma(N);\C)$ and the space $M_k^!(\Gamma(N))$ of weakly holomorphic modular forms on $\Gamma(N)$ (that is, holomorphic on $\h$ and meromorphic at the cusps). Moreover, the $q$-expansion of $F$ coincides with that of $F^\mathrm{an}$.
\end{proposition}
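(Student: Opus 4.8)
\ $\C$-linearity of $F\mapsto F^{\mathrm{an}}$ is clear once the map is well defined, so the content is threefold: (i) $F^{\mathrm{an}}$ is holomorphic on $\h$; (ii) $F^{\mathrm{an}}$ is $\Gamma(N)$-invariant of weight $k$, is meromorphic at the cusps, and has $q$-expansion equal to that of $F$, so $F^{\mathrm{an}}\in M_k^!(\Gamma(N))$; (iii) the resulting map is bijective. I write $T_\tau=\bigl(\C/(2\pi i\Z+2\pi i\tau\Z),\,dz,\,\beta_\tau\bigr)$ for the test object over $\C$ appearing in the definition of $F^{\mathrm{an}}(\tau)$.

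For (i), I would realize the entire analytic family $\tau\mapsto T_\tau$ as a single base change. Let $\mathcal{O}(\h)$ be the ring of holomorphic functions on $\h$ and let $\widetilde T=(\mathcal E,\,dx/y,\,\widetilde\beta)$ be the test object over $\mathcal{O}(\h)$ in which $\mathcal E$ is the Weierstrass curve of the lattice $2\pi i\Z+2\pi i\tau\Z$---an honest elliptic curve over $\mathcal{O}(\h)$, its discriminant being a nonzero multiple of $\Delta$ and hence nowhere vanishing on $\h$---with invariant differential $dx/y$ (matching $dz$ under the analytic uniformization) and $\widetilde\beta(\zeta_N^m,n)$ the division point $[2\pi i(m+n\tau)/N]$, whose affine coordinates are given by $\wp$ and $\wp'$ and are therefore holomorphic in $\tau$ on $\h$. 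The fibre of $\widetilde T$ at $\tau_0$ along evaluation $\mathcal{O}(\h)\to\C$ is exactly $T_{\tau_0}$, so by compatibility with base change $F^{\mathrm{an}}=F_{\mathcal{O}(\h)}(\widetilde T)$ lies in $\mathcal{O}(\h)$.

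For (ii), let $\gamma=\sabcd abcd\in\SL_2(\Z)$. Multiplication by $c\tau+d$ is an isomorphism $\C/(2\pi i\Z+2\pi i(\gamma\tau)\Z)\xrightarrow{\,\sim\,}\C/(2\pi i\Z+2\pi i\tau\Z)$ taking $dz$ to $(c\tau+d)\,dz$ and $\beta_{\gamma\tau}$ to $\beta_\tau\circ\sabcd dbca$, so by the homogeneity property
\[
F^{\mathrm{an}}(\gamma\tau)=(c\tau+d)^k\,F\bigl(\C/(2\pi i\Z+2\pi i\tau\Z),\,dz,\,\beta_\tau\circ\sabcd dbca\bigr);
\]
for $\gamma\in\Gamma(N)$ the matrix $\sabcd dbca$ is $\equiv I\bmod N$, hence $F^{\mathrm{an}}|\gamma=F^{\mathrm{an}}$. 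At $i\infty$: the exponential $z\mapsto e^z$ realizes $T_\tau$, for $\Im\tau$ large, as the base change of $(\mathrm{Tate}(q),\omega_{\mathrm{can}},\beta_{\mathrm{can}})$ along $q^{1/N}\mapsto e^{2\pi i\tau/N}$. Since the series defining $\mathrm{Tate}(q)$ converge on the punctured unit disc, this test object is already defined over the ring of complex Laurent series in $q^{1/N}$ convergent there; applying compatibility with base change to the inclusion of that ring into $\C((q^{1/N}))$ and to the specialization $q^{1/N}\mapsto e^{2\pi i\tau/N}$ shows simultaneously that the $q$-expansion of $F$ is a convergent Laurent series and that $F^{\mathrm{an}}(\tau)$ is its value at $q^{1/N}=e^{2\pi i\tau/N}$ for $\Im\tau$ large. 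Thus $F^{\mathrm{an}}$ is meromorphic at $i\infty$ with $q$-expansion that of $F$; replacing $F$ by the relabelled algebraic modular form $(E,\omega,\beta)\mapsto F(E,\omega,\beta\circ\bar g)$ for $\bar g\in\SL_2(\Z/N\Z)$ (with $\bar g$ acting through $\mu_N\times\Z/N\Z\cong(\Z/N\Z)^2$) and using the transformation law gives meromorphy at every cusp, so $F^{\mathrm{an}}\in M_k^!(\Gamma(N))$ and the $q$-expansions agree.

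Finally (iii). Injectivity is immediate: $F^{\mathrm{an}}=0$ forces the $q$-expansion of $F$ to vanish, and the $q$-expansion map on $M_k^{\mathrm{alg}}(\Gamma(N);\C)$ is injective. For surjectivity I would use the comparison of algebraic and analytic sections: for $N\geq3$ the level-$N$ moduli problem is representable by an affine curve $Y(N)_\C$ carrying the Hodge bundle $\omega$, with $M_k^{\mathrm{alg}}(\Gamma(N);\C)=H^0(Y(N)_\C,\omega^{\otimes k})$; the analytification of $\omega$, pulled back along $\h\to Y(N)_{\C}^{\mathrm{an}}=\Gamma(N)\backslash\h$, is trivialized by $dz$, and $F\mapsto F^{\mathrm{an}}$ is exactly this trivialized pullback, so by GAGA relative to the smooth compactification $X(N)$ its image is the set of analytic sections meromorphic at the cusps, i.e.\ $M_k^!(\Gamma(N))$; the cases $N\le2$ follow by descent from $N=4$. (Concretely one can instead exhibit enough algebraic modular forms by hand---Eisenstein series, generators of $\bigoplus_k M_k(\Gamma(N))$ in low weight, and the invertible form $\Delta$, whose value on any test object is a unit---together with injectivity of the $q$-expansion on $M_k^!(\Gamma(N))$, which holds by analytic continuation.) \textbf{The main obstacle} is precisely this surjectivity, i.e.\ showing the image is all of $M_k^!(\Gamma(N))$: it is not formal and needs either GAGA for the modular curve or the structure of the ring of modular forms, whereas the rest reduces to bookkeeping with the moduli interpretation together with the (standard) analytic uniformization of the Tate curve.
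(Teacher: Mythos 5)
The paper does not actually prove this proposition: it is recalled as part of the standard theory of algebraic modular forms, with the reader referred to Katz (\emph{op.\ cit.}, Chap.\ II) for details, and only the computation you reproduce in step (ii) (the effect of $g\in\SL_2(\Z)$ on $\beta_\tau$) appears explicitly in the text. Your outline is essentially the standard (Katz) proof and the overall architecture is sound: the $\mathcal{O}(\h)$-family (or, equivalently, the universal family over $Y(N)_\C$) for holomorphy, the lattice-rescaling computation for the weight-$k$ $\Gamma(N)$-transformation law, the convergent model of the Tate curve plus its analytic uniformization to identify the algebraic and analytic $q$-expansions and get meromorphy at all cusps via the slash action, and representability plus GAGA on $X(N)$ for surjectivity, which you correctly single out as the only non-formal point. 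Three spots deserve more care than your wording gives them. First, "the ring of complex Laurent series in $q^{1/N}$ convergent on the punctured disc" must be taken to mean convergent series with \emph{finite} principal part (indeed the Tate data only need convergent power series in $q^{1/N}$); the full ring of holomorphic functions on the punctured disc does not embed into $\C((q^{1/N}))$, so the base-change argument has to be run through that subring. Second, in the $\mathcal{O}(\h)$-family argument, verifying that $\widetilde\beta$ is an isomorphism of group schemes over $\mathcal{O}(\h)$ is not a purely fibrewise check over the evaluation points $\tau_0$, since $\operatorname{Spec}\mathcal{O}(\h)$ has many other primes; either argue directly (e.g.\ via the Weil-pairing/perfectness criterion for maps of finite \'etale group schemes of equal rank) or simply derive holomorphy from the representable picture you invoke anyway for surjectivity. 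Third, the cases $N\le 2$ "by descent from $N=4$" are asserted rather than proved: one must identify $M_k^{\mathrm{alg}}(\Gamma(N);\C)$ and $M_k^!(\Gamma(N))$ with the corresponding invariants at auxiliary level on \emph{both} sides, which is standard but not automatic. With these points filled in, your argument is a complete proof of the statement the paper quotes.
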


We now interpret the action of $\SL_2(\Z)$ on modular forms in algebraic terms. Let $F \in M_k^\mathrm{alg}(\Gamma(N);\C)$ with $f=F^\mathrm{an}$, and let $g = \sabcd abcd \in \SL_2(\Z)$. A simple computation shows that
\setcounter{equation}{2}
\begin{equation}\label{fg F}
(f |_k g)(\tau) = F\Bigl(\frac{\C}{2\pi i(\Z+\tau \Z)},dz,\beta'_\tau\Bigr)
\end{equation}
where the level $N$ structure $\beta'_\tau$ is given by
\begin{equation}\label{beta' beta}
\beta'_\tau(\zeta_N^m,n) = \beta_\tau(\zeta_N^{md+nb},mc+na).
\end{equation}
Let $\psi : (\Z/N\Z)^2 \to \mu_N(\C) \times \Z/N\Z$ be the isomorphism defined by $\psi(a,b)=(\zeta_N^b,a)$. Let us identify the level structure $\beta_\tau$ (resp. $\beta'_\tau$) with the map $\alpha_\tau = \beta_\tau \circ \psi$ (resp. $\alpha'_\tau = \beta'_\tau \circ \psi$). Then (\ref{beta' beta}) shows that
\begin{equation}
\alpha'_\tau(a,b)=\alpha_\tau((a,b)g).
\end{equation}
What we have here is the right action of $\SL_2(\Z)$ on the row space $(\Z/N\Z)^2$, which induces a left action on the set of level $N$ structures. As we will see, all this makes sense algebraically. For any $\Z[\zeta_N]$-algebra $R$, we denote by $\zeta_{N,R}$ the image of $\zeta_N=e^{2\pi i/N}$ under the structural morphism $\Z[\zeta_N] \to R$.

\begin{lemma}
If $R$ is a $\Z[\zeta_N,1/N]$-algebra, then there is an isomorphism of $R$-group schemes $(\Z/N\Z)_R \xrightarrow{\cong} (\mu_N)_R$ sending $1$ to $\zeta_{N,R}$.
\end{lemma}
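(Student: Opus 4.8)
The plan is to write down the natural candidate morphism and then prove it is an isomorphism by a Chinese Remainder Theorem argument; the hypothesis $1/N\in R$ enters only to guarantee that distinct $N$-th roots of unity in $R$ differ by a unit.

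First I would set up the morphism. Since $\mu_N=\operatorname{Spec}\Z[t]/(t^N-1)$, we have $(\mu_N)_R=\operatorname{Spec} R[t]/(t^N-1)$, while the constant group scheme $(\Z/N\Z)_R$ equals $\coprod_{i\in\Z/N\Z}\operatorname{Spec} R$, with coordinate ring $\prod_{i\in\Z/N\Z}R$. Because $\zeta_{N,R}^N=1$ in $R$, the rule $t\mapsto(\zeta_{N,R}^i)_{i\in\Z/N\Z}$ defines an $R$-algebra homomorphism
\[
\varphi^\sharp\colon R[t]/(t^N-1)\longrightarrow\prod_{i\in\Z/N\Z}R,\qquad f(t)\longmapsto\bigl(f(\zeta_{N,R}^i)\bigr)_{i\in\Z/N\Z},
\]
and I let $\varphi\colon(\Z/N\Z)_R\to(\mu_N)_R$ be the corresponding morphism of $R$-schemes. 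Unwinding the Hopf-algebra structures shows that $\varphi$ is a homomorphism of group schemes; moreover composing $\varphi^\sharp$ with the projection onto the factor $i=1$ sends $t$ to $\zeta_{N,R}$, so $\varphi$ carries the section $1$ of $(\Z/N\Z)_R$ to the $R$-point $\zeta_{N,R}$ of $\mu_N$, as required.

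Next I would show $\varphi^\sharp$ is bijective. In $R[t]$ we have $t^N-1=\prod_{i\in\Z/N\Z}(t-\zeta_{N,R}^i)$. Evaluating the identity $1+X+\dots+X^{N-1}=\prod_{m=1}^{N-1}(X-\zeta_N^m)$ at $X=1$ gives $\prod_{m=1}^{N-1}(1-\zeta_N^m)=N$, whence in $R$ the product $\prod_{m=1}^{N-1}(1-\zeta_{N,R}^m)$ equals the unit $N$; since a factor of a unit is a unit, each $1-\zeta_{N,R}^m$ with $1\le m\le N-1$ is a unit of $R$. Therefore, for $i\neq j$ in $\Z/N\Z$, the element $\zeta_{N,R}^j-\zeta_{N,R}^i=\zeta_{N,R}^i(\zeta_{N,R}^{j-i}-1)$ is a product of units (note $\zeta_{N,R}^i$ is invertible, being a root of unity), hence a unit, so the ideals $(t-\zeta_{N,R}^i)$ and $(t-\zeta_{N,R}^j)$ are comaximal in $R[t]$. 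The Chinese Remainder Theorem now gives
\[
R[t]/(t^N-1)\;\xrightarrow{\ \cong\ }\;\prod_{i\in\Z/N\Z}R[t]/(t-\zeta_{N,R}^i)\;\xrightarrow{\ \cong\ }\;\prod_{i\in\Z/N\Z}R,
\]
and this composite is exactly $\varphi^\sharp$. Hence $\varphi$ is an isomorphism of $R$-schemes; being simultaneously a homomorphism of group schemes, its inverse is automatically a homomorphism of group schemes, so $\varphi$ is the desired isomorphism of $R$-group schemes.

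The main (indeed only) obstacle is the comaximality step: one must use $1/N\in R$ to pass from the relation $\prod_{m=1}^{N-1}(1-\zeta_{N,R}^m)=N$ to the fact that each difference of distinct $N$-th roots of unity is a unit — equivalently, that $(\mu_N)_R$ is itself a disjoint union of $N$ copies of $\operatorname{Spec} R$. The verification that $\varphi$ respects comultiplication is a routine check that I would only sketch.
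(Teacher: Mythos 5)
Your proof is correct, and it takes a somewhat different route from the paper, even though the underlying map is the same (the discrete Fourier transform, i.e.\ evaluation of polynomials at the powers of $\zeta_{N,R}$, matching the paper's $[1]\mapsto(\zeta_{N,R}^a)_{a\in\Z/N\Z}$). The paper identifies $(\mu_N)_R=\operatorname{Spec} R[\Z/N\Z]$ and $(\Z/N\Z)_R=\operatorname{Spec} R^{\Z/N\Z}$, invokes the complex Fourier transform $\C[\Z/N\Z]\cong\C^{\Z/N\Z}$ (all irreducible representations of $\Z/N\Z$ are one-dimensional), and then descends: both $\mathcal{F}_\C$ and $\mathcal{F}_\C^{-1}$ have matrix entries in $\Z[\zeta_N,1/N]$, so the isomorphism base-changes to any $\Z[\zeta_N,1/N]$-algebra; the role of $1/N$ is the explicit $\frac1N$ in the inverse DFT matrix. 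You instead argue entirely over $R$: factor $t^N-1=\prod_i(t-\zeta_{N,R}^i)$, deduce from $\prod_{m=1}^{N-1}(1-\zeta_{N,R}^m)=N\in R^\times$ that the linear factors are pairwise comaximal, and apply the Chinese Remainder Theorem, with the hypothesis $1/N\in R$ entering at exactly the analogous point. Your version is more self-contained and elementary (no detour through $\C$ or representation theory, no appeal to rationality of a matrix and its inverse), at the cost of being a bit longer; the paper's version is quicker but leans on the standard Fourier-transform fact. Your deferral of the Hopf-algebra compatibility as a routine check is acceptable — the paper is no more explicit on that point — and your closing remark that a scheme isomorphism which is a group homomorphism has a group-homomorphism inverse correctly ties it off.
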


\begin{proof}
Note that $(\mu_N)_R = \operatorname{Spec} R[t]/(t^N-1) = \operatorname{Spec} R[\Z/N\Z]$ and $(\Z/N\Z)_R = \operatorname{Spec} R^{\Z/N\Z}$. If $R=\C$, then $\C[\Z/N\Z] \cong \C^{\Z/N\Z}$ because all irreducible representations of $\Z/N\Z$ have dimension 1. This isomorphism $\mathcal{F}_\C$ is given by the Fourier transform, and both $\mathcal{F}_\C$ and $\mathcal{F}_\C^{-1}$ have coefficients in $\Z[\zeta_N,1/N]$ with respect to the natural bases. It follows that in general $R[\Z/N\Z] \cong R^{\Z/N\Z}$ and this isomorphism sends $[1]$ to $(\zeta_{N,R}^a)_{a \in \Z/N\Z}$.
\end{proof}

Let $R$ be a $\Z[\zeta_N,1/N]$-algebra. We have an isomorphism of $R$-group schemes
\begin{equation*}
\psi_R : (\Z/N\Z)^2_R \to (\mu_N)_R \times (\Z/N\Z)_R
\end{equation*}
given by $\psi_R(a,b)=(\zeta_{N,R}^b,a)$. The group $\SL_2(\Z)$ acts from the right on the row space $(\Z/N\Z)^2_R$ by $R$-automorphisms, and for $\alpha : (\Z/N\Z)^2_R \xrightarrow{\cong} E[N]$ we define
\begin{equation}\label{action g alpha}
(g \cdot \alpha)(a,b)=\alpha((a,b)g) \qquad ((a,b) \in (\Z/N\Z)^2).
\end{equation}
Using $\psi_R$, we transport this to a left action of $\SL_2(\Z)$ on the set of level $N$ structures of an elliptic curve over $R$. Given a test object $T=(E,\omega,\beta)$ over $R$, we define $g \cdot T := (E,\omega,g\cdot \beta)$. For any $F \in M_k^\mathrm{alg}(\Gamma(N);R)$, we define $F|g \in M_k^\mathrm{alg}(\Gamma(N);R)$ by the rule $(F|g)(T) = F(g \cdot T)$ for any test object $T$ over any $R$-algebra $R'$. The computation (\ref{fg F}) then shows that the right action of $\SL_2(\Z)$ on $M_k^{\mathrm{alg}}(\Gamma(N);\C)$ corresponds to the usual slash action on $M_k^!(\Gamma(N))$.

\begin{remark}\label{rem SL2 action}
The action of $\SL_2(\Z)$ on algebraic modular forms over $\Z[\zeta_N,1/N]$-algebras has the following consequence: if a classical modular form $f \in M_k(\Gamma(N))$ has Fourier coefficients in some subring $A$ of $\C$, then for any $g \in \SL_2(\Z)$, the Fourier expansion of $f|g$ lies in $\Z[[q^{1/N}]] \otimes A[\zeta_N,1/N]$.
\end{remark}

We now interpret the action of $\Aut(\C)$ in algebraic terms (see \cite[p. 88]{ohta}). Let $\sigma \in \Aut(\C)$. For any $\C$-algebra $R$, we define $R^\sigma := R \otimes_{\C,\sigma^{-1}} \C$, which means that $(ax) \otimes 1 = x \otimes \sigma^{-1}(a)$ for all $a \in \C$, $x \in R$. We endow $R^\sigma$ with the structure of a $\C$-algebra using the map $a \in \C \mapsto 1 \otimes a \in R^\sigma$. We denote by $\phi_\sigma : R \to R^\sigma$ the map defined by $\phi_\sigma(x)=x \otimes 1$. The map $\phi_\sigma$ is a ring isomorphism, but one should be careful that $\phi_\sigma$ is not a morphism of $\C$-algebras, as it is only $\sigma^{-1}$-linear. For any test object $T$ over $R$, we denote by $T^\sigma$ its base change to $R^\sigma$ using the ring morphism $\phi_\sigma$.

Let $F \in M_k^{\mathrm{alg}}(\Gamma(N);\C)$ be an algebraic modular form. For any $\C$-algebra $R$, we define
\begin{align*}
F^\sigma_R : \{\textrm{isomorphism classes of test objects of level $N$ over $R$}\} & \to R \\
T & \mapsto \phi_\sigma^{-1}\bigl(F_{R^\sigma}(T^\sigma)\bigr).
\end{align*}
One may check that the collection of functions $F^\sigma_R$ satisfies the conditions (\ref{AlgMF cond 1}) and (\ref{AlgMF cond 2}) above, hence defines an algebraic modular form $F^\sigma \in M_k^\mathrm{alg}(\Gamma(N);\C)$. Moreover, since the Tate curve is defined over $\Z((q))$, one may check that the map $F \mapsto F^\sigma$ corresponds to the usual action of $\Aut(\C)$ on the Fourier expansions of modular forms: for every $F \in M_k^\mathrm{alg}(\Gamma(N);\C)$ and every $\sigma \in \Aut(\C)$, we have $(F^\sigma)^\mathrm{an} = (F^\mathrm{an})^\sigma$.

We finally come to the proof of Theorem \ref{fg sigma}.

\begin{proof}
Let $f \in M_k(\Gamma(N))$ with corresponding algebraic modular form $F \in M_k^{\mathrm{alg}}(\Gamma(N);\C)$. Let $g \in \SL_2(\Z)$ and $\sigma \in \Aut(\C)$. We take as test object $T=(\mathrm{Tate}(q),\omega_\mathrm{can},\beta_{\mathrm{can}})$ over $R=\Z((q^{1/N})) \otimes \C$. Since a modular form is determined by its Fourier expansion, and unravelling the definitions of $F|g$ and $F^\sigma$, it suffices to check that the test objects $g \cdot T^\sigma$ and $(g_\lambda \cdot T)^\sigma$ over $R^\sigma$ are isomorphic. Since $\SL_2(\Z/N\Z)$ acts only on the level structures of the test objects, we have to show that
\begin{equation}\label{to show}
g \cdot \beta_\mathrm{can}^\sigma \cong (g_\lambda \cdot \beta_\mathrm{can})^\sigma.
\end{equation}
For any scheme $X$ over $R$, let $X^\sigma$ denote its base change to $R^\sigma$ along $\phi_\sigma$. Since $\phi_\sigma$ is a ring isomorphism, the canonical projection map $X^\sigma \to X$ is an isomorphism of schemes, and we also denote by $\phi_\sigma : X \to X^\sigma$ the inverse map.

Put $E=\mathrm{Tate}(q)$ and $\beta = \beta_\mathrm{can}$. Let $\alpha = \beta \circ \psi_R : (\Z/N\Z)^2_R \xrightarrow{\cong} E[N]$. By functoriality, the level structure $\beta^\sigma$ is given by the following commutative diagram
\begin{equation}\label{big diagram}
\begin{tikzcd}
(\Z/N\Z)^2_R \arrow[dotted]{d}[swap]{\gamma} \arrow{r}{\psi_R} \arrow[bend left=20]{rr}{\alpha} & (\mu_N)_R \times (\Z/N\Z)_R \arrow{r}{\beta} \arrow{d}{\phi_\sigma}[swap]{\cong} & E[N] \arrow{d}{\phi_\sigma}[swap]{\cong} \\
(\Z/N\Z)^2_{R^\sigma} \arrow{r}{\psi_{R^\sigma}} \arrow[bend right=20]{rr}{\alpha^\sigma} & (\mu_N)_{R^\sigma} \times (\Z/N\Z)_{R^\sigma} \arrow{r}{\beta^\sigma} & E^\sigma[N].
\end{tikzcd}
\end{equation}
Let us compute the dotted arrow $\gamma$. Since $\phi_\sigma$ is $\sigma^{-1}$-linear, we have $\phi_\sigma(\zeta_{N,R}) = \zeta_{N,R^\sigma}^{\lambda^{-1}}$. It follows that
\begin{equation}
\phi_\sigma(\psi_R(a,b)) = \phi_\sigma(\zeta_{N,R}^b,a) = (\zeta_{N,R^\sigma}^{\lambda^{-1}b},a) = \psi_{R^\sigma}(a,\lambda^{-1}b)
\end{equation}
so that $\gamma(a,b)=(a,\lambda^{-1}b)$. We may thus express $\alpha^\sigma$ in terms of $\alpha$ by
\begin{equation}\label{alpha sigma}
\alpha^\sigma(a,b) = \phi_\sigma \circ \alpha \circ \gamma^{-1}(a,b) = \phi_\sigma \circ \alpha(a,\lambda b) = \phi_\sigma \circ \alpha \biggl((a,b) \abcd 100\lambda \biggr).
\end{equation}
Let us make explicit both sides of (\ref{to show}). By (\ref{action g alpha}) and (\ref{alpha sigma}), the left hand side is given by
\begin{equation}
(g \cdot \alpha^\sigma)(a,b) = \alpha^\sigma((a,b) g) = \phi_\sigma \circ \alpha \biggl((a,b) g \abcd 100\lambda\biggr).
\end{equation}
Let us now turn to the right hand side of (\ref{to show}). By (\ref{action g alpha}), we have $(g_\lambda \cdot \alpha)(a,b) = \alpha((a,b)g_\lambda)$. Applying the commutative diagram (\ref{big diagram}) with $\alpha$ replaced by $g_\lambda \cdot \alpha$, we get
\begin{equation}
(g_\lambda \cdot \alpha)^\sigma(a,b) = \phi_\sigma \circ (g_\lambda \cdot \alpha) \biggl((a,b) \abcd 100\lambda \biggr) = \phi_\sigma \circ \alpha \biggl((a,b) \abcd 100\lambda g_\lambda \biggr).
\end{equation}
Finally, we note that $g \sabcd 100\lambda  = \sabcd 100\lambda g_\lambda$.

\end{proof}

\bibliographystyle{plain}
\bibliography{refs}

\vspace{\baselineskip}

\end{document}